\documentclass[12pt]{amsart}

 \usepackage{amsfonts,graphics,amsmath,amsthm,amsfonts,amscd,amssymb,amsmath,latexsym,multicol}
\usepackage{epsfig,url}
\usepackage{flafter}



\makeatletter

\def\jobis#1{FF\fi
  \def\predicate{#1}%
  \edef\predicate{\expandafter\strip@prefix\meaning\predicate}%
  \edef\job{\jobname}%
  \ifx\job\predicate
}

\makeatother

\if\jobis{proposal}%
\else
\fi

 \usepackage[matrix, arrow]{xy}

\DeclareMathOperator{\Supp}{Supp}

\DeclareMathOperator{\var}{var}

 \newcommand{\N}{\mathbb N}
 
 \newcommand{\Q}{\mathbb Q}

 \newcommand{\bir}{\dashrightarrow}


 \numberwithin{equation}{subsection}
 \numberwithin{footnote}{subsection}

 \newtheorem{cor}[subsection]{Corollary}

 \newtheorem{thm}[subsection]{Theorem}
 \newtheorem{conj}[subsection]{Conjecture}

{

}

 \newcommand{\ke}[1]{$\acute{\mbox{e}}$}
 \newcommand{\ku}[1]{$\acute{\mbox{u}$}}
 \newcommand{\kl}[1]{$\acute{\mbox{l}}$}
 \newcommand{\kh}[1]{$\acute{\mbox{h}}$}
 \newcommand{\kr}[1]{$\acute{\mbox{r}}$}
 \newcommand{\kx}[1]{$\acute{\mbox{x}}$}
 \newcommand{\ki}[1]{${\^\i}$}


\title{Iitaka conjecture $C_{n,m}$ in dimension six}
\author{Caucher Birkar}
\date{\today}
\begin{document}
\maketitle

\begin{abstract}
We prove that the Iitaka conjecture $C_{n,m}$  for algebraic fibre spaces holds up to dimension $6$, that is, when $n\le 6$.
\end{abstract}


\section{Introduction}

We work over an algebraically closed field $k$ of 
characteristic zero. Let $X$ be a normal variety. The canonical divisor $K_X$ is one of 
the most important objects associated with $X$ especially in birational geometry. 
If another normal variety $Z$ is in some way related to $X$, it is often crucial to find a relation 
between $K_X$ and $K_Z$. A classical example is when $Z$ is a 
smooth prime divisor on a smooth $X$ in which case we have $(K_X+Z)|_Z=K_Z$. 

An algebraic fibre space is a surjective morphism $f\colon X\to Z$ 
of normal projective varieties, with connected fibres. A central problem in birational geometry  
is the following conjecture which relates the 
Kodaira dimensions of $X$ and $Z$. In fact, it is an attempt to relate $K_X$ and $K_Z$.

\begin{conj}[Iitaka]\label{c-iitaka}
Let $f\colon X\to Z$ be an algebraic fibre space where $X$ and $Z$ are smooth projective varieties of dimension $n$ and $m$, 
respectively, and let $F$ be a general fibre of $f$. Then, 
$$
\kappa(X)\ge \kappa(F)+\kappa(Z)
$$
\end{conj}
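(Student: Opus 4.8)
The plan is to first dispose of the trivial cases and then reduce everything to the existence of good minimal models for the fibres. If $\kappa(F)=-\infty$ or $\kappa(Z)=-\infty$ the right-hand side is $-\infty$ and there is nothing to prove, so I may assume $\kappa(F)\ge 0$ and $\kappa(Z)\ge 0$. The two inputs I would lean on are Viehweg's weak positivity of the sheaves $f_*\omega_{X/Z}^{\otimes k}$ and, decisively, Kawamata's additivity theorem: $C_{n,m}$ holds for $f$ whenever the general fibre $F$ admits a good minimal model. This shifts the whole burden onto producing good minimal models for $F$, a variety of dimension $d=n-m$.

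With $n\le 6$ this immediately settles the range $m\ge 3$: the fibre then has dimension $d\le 3$, where abundance---and hence the existence of good minimal models---is known. The real content is concentrated in $m=2$ and $m=1$, where $d=4$ and $d=5$ and abundance is not available in general.

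To attack these I would pass to the relative Iitaka fibration and factor $f$ birationally as $X\xrightarrow{g}Y\xrightarrow{h}Z$, arranged so that the general fibre of $h$ is the Iitaka base of $F$---of general type and of dimension $\kappa(F)$---while the general fibre of $g$ has Kodaira dimension $0$. Since varieties of general type have good minimal models by the minimal model program (their minimal models carry a big nef, hence semiample, canonical class), Kawamata's theorem applied to $h$ gives $\kappa(Y)\ge \kappa(F)+\kappa(Z)$. Applying it again to $g$, whose fibre has Kodaira dimension $0$, would yield $\kappa(X)\ge 0+\kappa(Y)\ge \kappa(F)+\kappa(Z)$, finishing the proof---provided the Kodaira-dimension-zero fibre of $g$ admits a good minimal model. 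That fibre has dimension $d-\kappa(F)$, so once $\kappa(F)$ is large enough it falls into dimension $\le 3$ and we conclude as before; what survives is the existence of good minimal models for varieties of Kodaira dimension $0$ in dimensions $4$ and $5$.

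\textbf{Main obstacle.} This last point is where I expect the real work to lie. My proposed route is to first produce a minimal model $F'$ of such a fibre---available in these dimensions from termination results together with BCHM---which reduces the claim to abundance for $F'$ with $\kappa(F')=0$, i.e.\ to showing that the nef divisor $K_{F'}$ satisfies $K_{F'}\sim_{\Q}0$. Equivalently one must rule out $\kappa(F')=0<\nu(F')$, where $\nu$ denotes the numerical dimension; the case $\nu(F')=0$ of abundance is already known (Nakayama, Kawamata). Closing this gap in dimensions $4$ and $5$---presumably by exploiting the known low-dimensional and $\nu=0$ cases of abundance together with the structure of the Albanese map for $\kappa=0$ varieties---is the crux of the argument.
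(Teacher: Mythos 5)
Your framework---reduce to $\kappa(F),\kappa(Z)\ge 0$, invoke Kawamata's theorem that $C_{n,m}$ holds whenever the general fibre has a good minimal model, and thereby dispose of the range where the fibre dimension is at most $3$---matches the paper's opening reductions. Two points diverge. For $m=1$ you do not need minimal models at all: $C_{n,1}$ is an unconditional theorem of Kawamata [\ref{kaw2}], which the paper simply quotes; your Iitaka-fibration route would instead leave you needing good minimal models for $\kappa=0$ fibres of dimension up to $5$, which you cannot supply. For $m=2$ and $\kappa(F)>0$ your two-step factorization through the relative Iitaka fibration is essentially the paper's alternative argument (its main route quotes [\ref{Kaw}, Theorem 7.3] to get goodness of a minimal model of the $4$-dimensional fibre once $\kappa(F)>0$), so that part is sound.

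The genuine gap is exactly the case you flag as the crux, $m=2$ and $\kappa(F)=0$: you propose to establish abundance for minimal models of Kodaira dimension zero in dimensions $4$ and $5$, ``presumably by exploiting the known low-dimensional and $\nu=0$ cases together with the Albanese map.'' That is an open problem (only $\nu=0$ and $\dim\le 3$ are known), and you offer no argument, so the proof does not close. The point of the paper is that this abundance statement is not needed: Theorem \ref{p-surface} treats $m=2$, $\kappa(F)=0$ directly via the Fujino--Mori canonical bundle formula $pK_{X'}=pg^*(K_{Z'}+B+L)+R$, and then splits on the nef dimension of the moduli part $L$. If $n(L)=2$ or $\kappa(Z')=2$, then $K_{Z'}+B+L$ is big by Ambro's theorem and one concludes $\kappa(X)=2$; if $n(L)\le 1$, then $L$ is numerically equivalent to an effective $\Q$-divisor $D$ (via the nef reduction map), and the Campana--Peternell theorem (if $K_X\equiv A$ with $A\ge 0$ effective then $\kappa(X)\ge\kappa(A)$) yields $\kappa(X)\ge\kappa(K_{Z'}+B+D)\ge\kappa(Z)$. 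Without this device, or an actual proof of abundance for $\kappa=0$ fourfolds, your argument is incomplete at its decisive step.
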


This conjecture is usually denoted by $C_{n,m}$. A strengthend version was proposed by 
Viehweg (cf. [\ref{Viehweg}]) as follows which is denoted by $C_{n,m}^+$.

\begin{conj}[Iitaka-Viehweg]\label{c-iitaka-viehweg}
Under the assumptions of \ref{c-iitaka}, 
$$
\kappa(X)\ge \kappa(F)+\max\{\kappa(Z),\var(f)\}
$$
when $\kappa(Z)\ge 0$.
\end{conj}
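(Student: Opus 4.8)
The plan is to reduce the Iitaka--Viehweg inequality to positivity properties of pluricanonical direct images and then to feed in the low-dimensional minimal model program. I would first dispose of the base by running its Iitaka fibration $Z\bir W$, with $\dim W=\kappa(Z)$. Over a general point $w\in W$ the fibre $Z_w$ satisfies $\kappa(Z_w)=0$, and the restricted family $X_w\to Z_w$ has the same general fibre $F$. Using the easy additivity $\kappa(X)\ge\kappa(X_w)+\dim W$ together with the subadditivity of variation $\var(f)\le\var(f_w)+\dim W$, the full statement follows once I know $\kappa(X_w)\ge\kappa(F)+\var(f_w)$ over the base $Z_w$ of Kodaira dimension zero. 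Thus I may assume from the outset that $\kappa(Z)=0$, so that $\max\{\kappa(Z),\var(f)\}=\var(f)$ and the target becomes $\kappa(X)\ge\kappa(F)+\var(f)$.

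The heart is then to manufacture positivity of order $\var(f)$ from the genuine moduli variation of the fibres. Following Viehweg I would pass to resolutions of the fibre products $X^{(r)}=X\times_Z\cdots\times_Z X$ and analyse the weakly positive sheaves $f_*\omega_{X/Z}^{\otimes m}$; the fibre-product trick converts weak positivity into bigness and shows that a suitable determinant line bundle on a model of $Z$ has Iitaka dimension at least $\var(f)$. Combining this pulled-back positivity with the relative pluricanonical sections that detect $\kappa(F)$ bounds $\kappa(K_{X/Z}+f^*K_Z)=\kappa(X)$ below by $\kappa(F)+\var(f)$. This argument requires that the general fibre $F$ admit a good minimal model, so that $\kappa(F)$ is realized by an honest semiample system whose sections can be spread out over $Z$; granting that input, the case of $F$ of general type is Viehweg's theorem and furnishes the model for the estimate.

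To meet the good-minimal-model requirement I would induct on $\dim F=n-m$ using the relative Iitaka fibration of $f$, which factors $f$ through $X\to Y\to Z$ and reduces the problem to fibres with $\kappa$ equal to $0$ or to their own dimension; general-type fibres are handled above, leaving the case $\kappa(F)=0$. Within $n\le 6$ the base $\dim Z=1$ is Kawamata's theorem, and for $\dim F\le 3$ good minimal models and abundance are unconditional, so the one genuinely resistant configuration is $m=2$ with a fourfold fibre of Kodaira dimension zero. The main obstacle is exactly this: the existence of good minimal models, equivalently abundance, for fibres of dimension four. This is where the restriction $n\le 6$ originates, and the bulk of the argument must go into establishing the required abundance statement in dimension four or into dodging it through the tower of fibrations.
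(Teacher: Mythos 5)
The statement you are asked to prove is stated in the paper as a \emph{conjecture} ($C_{n,m}^+$), not a theorem: the paper offers no proof of it and proves only the weaker $C_{n,m}$ for $n\le 6$ (and $C_{n,m}$ for $m=2$, $\kappa(F)=0$). Your proposal does not close this gap either, and it contains two concrete problems. First, your opening reduction to $\kappa(Z)=0$ is not legitimate: the inequality $\kappa(X)\ge \kappa(X_w)+\dim W$ for the composition $X\to Z\bir W$ with the Iitaka fibration of $Z$ is \emph{not} ``easy additivity'' --- the easy direction is the opposite inequality $\kappa(X)\le \kappa(X_w)+\dim W$. The inequality you need is an instance of the hard subadditivity you are trying to prove; it would follow from Viehweg's theorem only if $W$ were of general type, which the base of an Iitaka fibration need not be. The accompanying claim $\var(f)\le \var(f_w)+\dim W$ is also unjustified. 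Second, after correctly identifying via Kawamata's reduction that everything hinges on good minimal models for the fibre, you arrive at the genuinely hard configuration ($m=2$, $\dim F=4$, $\kappa(F)=0$) and simply announce that ``the bulk of the argument must go into establishing the required abundance statement in dimension four.'' That is precisely the open problem; naming it is not a proof, and abundance for fourfolds of Kodaira dimension zero remains unknown.

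It is worth contrasting this with what the paper actually does for the resistant case, since it shows how to \emph{dodge} fourfold abundance rather than prove it (at the cost of obtaining only $C_{6,2}$, not $C_{6,2}^+$). Instead of seeking positivity of $f_*\omega_{X/Z}^{\otimes m}$ via fibre products, the paper applies the Fujino--Mori canonical bundle formula to write $pK_{X'}=pg^*(K_{Z'}+B+L)+R$ on a suitable birational model, pushing the whole problem down to the surface $Z'$, where $B\ge 0$ and $L$ is a nef $\Q$-divisor (the moduli part). The argument then splits according to the nef dimension $n(L)\in\{0,1,2\}$: when $n(L)=2$ or $\kappa(Z)=2$ one gets bigness of $K_{Z'}+B+L$ from Ambro's theorem, and when $n(L)\le 1$ one replaces $L$ by a numerically equivalent effective divisor $D$ coming from the nef reduction map and concludes with the Campana--Peternell theorem that $K_X\equiv A+M$ with $A$ effective and $M\equiv 0$ forces $\kappa(X)\ge\kappa(A)$. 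No minimal model or abundance statement for the fourfold fibre is ever invoked. If you want to salvage your write-up as a proof of $C_{n,m}$ for $n\le 6$ rather than of the conjecture $C^+_{n,m}$, this canonical-bundle-formula route is the missing ingredient; as a proof of the stated conjecture, the proposal is incomplete.
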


 Kawamata [\ref{Kaw3}] showed that these conjectures  hold if the general fibre 
$F$ has a good minimal model, in particular, if the minimal model and the 
abundance conjectures hold in dimension $n-m$ for varieties of nonnegative Kodaira dimension. 
However, at the moment the minimal model conjecture for such varieties is known only up to dimension $5$ 
[\ref{B2}] and the abundance conjecture up to dimension $3$ [\ref{Miyaoka}][\ref{Kaw-abundance}] and some cases 
in higher dimensions which will be discussed below. Viehweg [\ref{Viehweg}] proved $C_{n,m}^+$ when $Z$ is of general type. When $Z$ is a curve $C_{n,m}$ was setteld by Kawamata [\ref{kaw2}]. Koll\'ar [\ref{kollar}] proved $C_{n,m}^+$ when $F$ is of general type. The latter also follows from Kawamata [\ref{Kaw3}] and the  existence of good minimal models for varieties of general type by Birkar-Cascini-Hacon-M$^c$Kernan [\ref{BCHM}].  We refer the reader to Mori [\ref{Mori}] for a detailed survey of the above conjectures and related problems. In this paper, we prove the following 

\begin{thm}\label{6-iitaka}
Iitaka conjecture $C_{n,m}$ holds when $n\le 6$.
\end{thm}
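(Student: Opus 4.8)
The plan is to funnel everything into Kawamata's theorem [\ref{Kaw3}], which asserts that $C_{n,m}$ (indeed $C_{n,m}^+$) holds as soon as the general fibre $F$ admits a good minimal model, and then to verify this hypothesis case by case according to the fibre dimension $d=\dim F=n-m$. First I would dispose of the trivial ranges: if $m=0$ or $m=n$ the statement is vacuous, and if $\kappa(F)=-\infty$ or $\kappa(Z)=-\infty$ the right-hand side is $-\infty$, so I may assume $1\le m\le n-1$ and $\kappa(F),\kappa(Z)\ge 0$. Then $1\le d\le 5$, and the entire theorem amounts to showing that every smooth projective $F$ with $\kappa(F)\ge 0$ and $\dim F=d\le 5$ occurring as such a fibre has a good minimal model.

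Next I would run the case analysis on $d$. For $d\le 3$ a minimal model exists by [\ref{BCHM}] and abundance holds by [\ref{Miyaoka}][\ref{Kaw-abundance}], so $F$ has a good minimal model and Kawamata's theorem applies verbatim. For $d=5$ the constraint $n\le 6$ forces $m=1$, and for $d=4$ with $m=1$ the base $Z$ is again a curve; in both situations the result is already known by Kawamata's solution of $C_{n,m}$ over a curve [\ref{kaw2}], which lets me bypass the good-minimal-model question altogether. Tabulating the remaining possibilities with $n\le 6$ shows that the only case left is $(n,m)=(6,2)$, where $Z$ is a surface and $F$ is a fourfold. The whole theorem thus reduces to $C_{6,2}$, and by the above it suffices to prove that a smooth fourfold $F$ with $\kappa(F)\ge 0$ has a good minimal model.

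For this I would separate according to $\kappa(F)$. A minimal model $F'$ exists by the minimal model program in dimension five [\ref{B2}], so the real content is semiampleness of $K_{F'}$. When $\kappa(F)=4$ this is [\ref{BCHM}]. When $1\le\kappa(F)\le 3$ I would pass to the Iitaka fibration of $F'$: its general fibre $G$ satisfies $\dim G=4-\kappa(F)\le 3$ and $\kappa(G)=0$, hence has a good minimal model by the three-dimensional case; I would then propagate abundance from $G$ up to $F'$ via the Fujino--Mori canonical bundle formula together with Kawamata's semipositivity of the moduli part, which is exactly Kawamata's reduction of abundance to the generic fibre of the Iitaka fibration. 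This yields semiampleness of $K_{F'}$ for every $\kappa(F)\ge 1$.

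The main obstacle is the surviving subcase $\kappa(F)=0$: a minimal fourfold $F'$ with $K_{F'}$ nef and $\kappa(F')=0$, for which abundance demands $K_{F'}\sim_{\Q}0$. Here the Iitaka fibration is a point and there is no lower-dimensional fibre to exploit, so the reduction above collapses. The crux is to show that numerical and Kodaira dimensions coincide, that is $\kappa_{\sigma}(F')=0$, after which the torsionness of a numerically trivial nef canonical divisor gives $K_{F'}\sim_{\Q}0$. I expect this zero-Kodaira-dimension abundance for fourfolds to be the decisive input and the step where the argument is most delicate.
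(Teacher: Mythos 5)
Your reductions down to the single remaining case $(n,m)=(6,2)$, $\dim F=4$ agree with the paper's, and your treatment of $\kappa(F)\ge 1$ (minimal model of $F$ plus goodness via the Iitaka fibration, then Kawamata [\ref{Kaw3}]) is essentially what the paper does, citing [\ref{Kaw}, Theorem 7.3]. The problem is the subcase you yourself flag at the end: for a minimal fourfold $F'$ with $K_{F'}$ nef and $\kappa(F')=0$, the assertion that $K_{F'}\sim_{\Q}0$ is precisely the abundance conjecture for fourfolds of Kodaira dimension zero, which is open (the paper says explicitly that ``abundance is not yet known'' here). Writing that you ``expect this zero-Kodaira-dimension abundance for fourfolds to be the decisive input'' identifies the obstruction but does not overcome it, and no known result supplies it. Since this is the only genuinely new case of the theorem --- everything else follows from [\ref{kaw2}], [\ref{Kaw3}] and the low-dimensional MMP --- the proposal as it stands does not prove the statement.

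The paper's way around this is to abandon the fibre entirely in that case and work on the base: this is the content of Theorem \ref{p-surface}, which treats $m=2$, $\kappa(F)=0$ with no abundance input on $F$. One applies the Fujino--Mori canonical bundle formula [\ref{FM}] to write $pK_{X'}=pg^*(K_{Z'}+B+L)+R$ over a birational model $Z'$ of the surface $Z$, and then splits according to the nef dimension of the moduli part $L$: if $n(L)=2$ or $\kappa(Z')=2$, then $K_{Z'}+B+L$ is big by Ambro [\ref{Ambro}] and one gets $\kappa(X)=2\ge\kappa(Z)$; if $n(L)\le 1$, then $L$ is numerically equivalent to an effective $\Q$-divisor $D$ (by the nef reduction map [\ref{nef}], or trivially when $n(L)=0$), and the Campana--Peternell theorem [\ref{CP}, Theorem 3.1] converts the numerical equivalence $K_X\equiv K_X+\sigma_*g^*(D-L)$ into $\kappa(X)\ge\kappa(K_{Z'}+B+D)\ge\kappa(Z)$. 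To complete your argument you would need to replace your final paragraph by something of this kind rather than by fourfold abundance.
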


\begin{thm}\label{p-surface}
Iitaka conjecture $C_{n,m}$ holds when $m=2$ and  $\kappa(F)=0$.
\end{thm}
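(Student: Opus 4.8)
The plan is to reduce the statement to the base surface $Z$ by means of the canonical bundle formula and then to analyse the resulting divisor using the classification of surfaces. Since $\dim Z=2$ we have $\kappa(Z)\in\{-\infty,0,1,2\}$, and the inequality $\kappa(X)\ge\kappa(F)+\kappa(Z)=\kappa(Z)$ is vacuous when $\kappa(Z)=-\infty$. When $\kappa(Z)=2$, i.e. $Z$ is of general type, the stronger statement $C_{n,m}^+$ is already known by Viehweg [\ref{Viehweg}], so that case is settled. Thus I would concentrate entirely on the two remaining cases $\kappa(Z)=0$ and $\kappa(Z)=1$.

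Since $\kappa$ is a birational invariant, I would first replace $X$ and $Z$ by convenient smooth projective birational models so that $f$ has simple normal crossing discriminant. As $\kappa(F)=0$, the canonical bundle formula of Kawamata and Fujino--Mori applies and yields a $\Q$-linear equivalence
$$
K_X\sim_{\Q} f^*(K_Z+B_Z+M_Z),
$$
in which the discriminant (boundary) part $B_Z$ is effective and the moduli part $M_Z$ is nef by Kawamata's semipositivity theorem. Consequently
$$
\kappa(X)=\kappa(Z,K_Z+B_Z+M_Z),
$$
and since $B_Z\ge 0$ it suffices to prove $\kappa(Z,K_Z+M_Z)\ge\kappa(Z)$ for the surface $Z$ and the nef $\Q$-divisor $M_Z$.

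Next I would pass to a minimal model of $Z$ (keeping track of $M_Z$ as the trace of the moduli $b$-divisor) and treat $(Z,B_Z,M_Z)$ as a generalized polarized pair. On a minimal surface with $\kappa(Z)\ge 0$ the divisor $K_Z$ is nef, and by abundance for surfaces it is in fact semiample; hence $K_Z+M_Z$ is nef. If $\kappa(Z)=1$ then $K_Z\not\equiv 0$, so $K_Z+M_Z$ is a nonzero nef divisor whose numerical dimension is at least $1$; once one knows that $K_Z+M_Z$ realises its numerical dimension as Kodaira dimension, this gives $\kappa(Z,K_Z+M_Z)\ge 1=\kappa(Z)$. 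If $\kappa(Z)=0$ then $K_Z\sim_{\Q}0$, and the whole content collapses to showing that $M_Z$ (together with $B_Z$) is $\Q$-effective, i.e. $\kappa(Z,M_Z)\ge 0$.

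The main obstacle is precisely this last point: controlling the moduli part. Nefness of $M_Z$ alone is not enough --- a general nef $\Q$-divisor on, say, an abelian surface can fail to be $\Q$-effective --- so one cannot treat $M_Z$ as an arbitrary nef class. The crux is therefore to use the Hodge-theoretic origin of $M_Z$ to establish its semiampleness (equivalently, $b$-semiampleness of the moduli $b$-divisor) over the two-dimensional base; this is where the hypothesis $\dim Z=2$ is used essentially, reducing the variation-of-Hodge-structure input to a tractable case, and it is the step I expect to require the most care. Granting semiampleness of $M_Z$, the numerical computations above close both the $\kappa(Z)=1$ and $\kappa(Z)=0$ cases, completing the proof.
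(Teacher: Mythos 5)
Your reduction via the Fujino--Mori canonical bundle formula to the inequality $\kappa(Z,K_Z+B_Z+M_Z)\ge\kappa(Z)$ is exactly the paper's first step. But the step you defer --- semiampleness (or even $\Q$-effectivity) of the moduli part $M_Z$ --- is the whole difficulty, and your reason for optimism is misplaced: the Hodge-theoretic input governing $M_Z$ lives in the fibre direction, not the base, so taking $\dim Z=2$ does not reduce the variation of Hodge structure to a tractable case. In the application one has $n=6$, $m=2$, hence $\dim F=4$ with $\kappa(F)=0$, and $b$-semiampleness of the moduli divisor is not known in anything like this generality; granting it leaves a genuine gap at the crux of your argument. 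Your $\kappa(Z)=1$ case has the same problem in another guise: ``$K_Z+M_Z$ realises its numerical dimension as Kodaira dimension'' is an abundance-type statement for the generalized pair that you cannot invoke without already knowing much more about $M_Z$ than nefness.

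The paper closes this gap without touching semiampleness. It applies the nef reduction map of Tsuji and Bauer et al.\ to the nef divisor $L$ (your $M_Z$) on the surface $Z'$ and splits according to the nef dimension $n(L)$. If $n(L)=2$ or $\kappa(Z')=2$, Ambro's theorem makes $K_{Z'}+L$ big and one gets $\kappa(X)=2\ge\kappa(Z)$ directly. If $n(L)\le 1$, then $L$ is \emph{numerically} equivalent to an effective $\Q$-divisor $D$ (either $L\equiv 0$, or $L\equiv\pi^*D'$ for the nef reduction $\pi\colon Z'\to C$ to a curve with $\deg D'>0$); one then replaces $L$ by $D$ at the cost of a numerically trivial error and invokes the Campana--Peternell theorem ($K\equiv A+M$ with $A$ effective and $M\equiv 0$ implies $\kappa\ge\kappa(A)$) to convert this purely numerical information into $\kappa(X)\ge\kappa(K_{Z'}+B+D)\ge\kappa(Z)$. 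That substitution of ``numerically equivalent to effective'' for ``$\Q$-effective,'' made usable by Campana--Peternell, is the idea your proposal is missing, and it is what makes the two-dimensionality of $Z$ genuinely relevant (via the nef reduction map) rather than the Hodge theory of the family.
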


When $n\le 5$ or when $n=6$ and $m\neq 2$, $C_{n,m}$ follows immediately from theorems of Kawamata and deep results of the 
minimal model program. 

Iitaka conjecture is closely related to the following 

\begin{conj}[Ueno]\label{c-ueno}
Let $X$ be a smooth projective variety with $\kappa(X)=0$. Then, the Albanese map $\alpha\colon X\to A$ 
satisfies the following\\\\
$(1)$ $\kappa(F)=0$ for the general fibre $F$,\\
$(2)$ there is an etale cover $A'\to A$ such that $X\times_A A'$ is birational to $F\times A'$ over $A$.
\end{conj}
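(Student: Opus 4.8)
The plan is to reduce Ueno's conjecture \ref{c-ueno} to the Iitaka--Viehweg conjecture $C_{n,m}^+$ of \ref{c-iitaka-viehweg}, applied to the Albanese map itself. First I would invoke Kawamata's structure theorem for varieties of Kodaira dimension zero: when $\kappa(X)=0$, the Albanese map $\alpha\colon X\to A$ is surjective with connected fibres, hence is an algebraic fibre space in the sense of \ref{c-iitaka}. This turns the problem into an analysis of $\alpha$, whose base $A$ is an abelian variety and therefore satisfies $\kappa(A)=0$, so that the hypothesis $\kappa(A)\ge 0$ of \ref{c-iitaka-viehweg} is automatically met.

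The numerical core of the argument is then immediate. Applying $C_{n,m}^+$ to $\alpha$ yields
$$
0=\kappa(X)\ge \kappa(F)+\max\{\kappa(A),\var(\alpha)\}=\kappa(F)+\var(\alpha),
$$
since $\var(\alpha)\ge 0$ and $\kappa(A)=0$. To pin down the two summands I would next establish the complementary bound $\kappa(F)\ge 0$: a general fibre $F$ cannot be uniruled, for otherwise $X$ would be uniruled and hence $\kappa(X)=-\infty$, contradicting $\kappa(X)=0$; and in the dimension ranges where non-uniruledness forces $\kappa\ge 0$ — via the minimal model and abundance results recalled in the introduction ([\ref{B2}],[\ref{Miyaoka}],[\ref{Kaw-abundance}]) — this gives $\kappa(F)\ge 0$. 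Combined with the displayed inequality, and using $\var(\alpha)\ge 0$, this forces
$$
\kappa(F)=0 \qquad\text{and}\qquad \var(\alpha)=0,
$$
the first of which is exactly statement $(1)$.

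It remains to upgrade the vanishing $\var(\alpha)=0$ into the birational isotriviality asserted in $(2)$. Here I would appeal to the standard dictionary between variation and moduli: a fibre space whose general fibre admits a good minimal model and which has $\var=0$ is birationally isotrivial, so after a suitable \'etale base change $A'\to A$ the pulled back family $X\times_A A'$ becomes birational over $A$ to the constant family $F\times A'$. This is precisely $(2)$.

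The main obstacle is twofold. First, the reduction genuinely requires the strengthened conjecture $C_{n,m}^+$ rather than $C_{n,m}$ alone, since only the variation term $\var(\alpha)$ detects isotriviality; consequently the argument is unconditional only where $C_{n,m}^+$ is available, which by Kawamata [\ref{Kaw3}] holds once the general fibre $F$ has a good minimal model, and this in turn is controlled by the minimal model and abundance conjectures in dimension $\dim F=n-\dim A$. Second, and more seriously, converting $\var(\alpha)=0$ into an actual birational trivialization over an \'etale cover relies on the structure theory of families with vanishing variation, and it is there — rather than in the Kodaira-dimension inequality — that the bulk of the geometric work resides.
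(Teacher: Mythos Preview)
The statement you are attempting to prove is Conjecture \ref{c-ueno}, which the paper states as an \emph{open conjecture} and does not prove. The only related result established in the paper is Corollary \ref{6-ueno}, namely part $(1)$ under the additional hypothesis $\dim X\le 6$, and its proof is the single line ``Immediate by Theorem \ref{6-iitaka}'': since $A$ is an abelian variety one has $\kappa(A)=0$, and $C_{n,m}$ for $n\le 6$ gives $0=\kappa(X)\ge \kappa(F)+\kappa(A)=\kappa(F)$; combined with the elementary inequality $\kappa(F)\ge 0$ (obtained by restricting a nonzero section of $|mK_X|$ to a general fibre, using $K_X|_F=K_F$), this forces $\kappa(F)=0$.

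Your write-up, by contrast, is not a proof but a conditional reduction. You assume $C_{n,m}^+$ (Conjecture \ref{c-iitaka-viehweg}), which is itself open in general, and for part $(2)$ you further assume that $F$ has a good minimal model and then invoke a ``standard dictionary'' translating $\var(\alpha)=0$ into birational isotriviality after an \'etale base change. These are exactly the open inputs; an argument that assumes them does not prove Ueno's conjecture, and the paper makes no claim to do so. Note also that your route to $\kappa(F)\ge 0$ via non-uniruledness is circuitous and itself conditional: the implication ``non-uniruled $\Rightarrow \kappa\ge 0$'' is a deep open problem in arbitrary dimension, whereas the direct restriction argument above is unconditional and is all that is needed for part $(1)$ once $C_{n,m}$ is known.
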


Ueno conjecture is often referred to as Conjecture K. Kawamata [\ref{kaw-albanese}] showed that $\alpha$ is an algebraic fibre space. See Mori [\ref{Mori}, \S 10] for a discussion of this conjecture. 

\begin{cor}\label{6-ueno}
Part $(1)$ of Ueno conjecture holds when $\dim X\le 6$.
\end{cor}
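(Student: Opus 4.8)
The plan is to read off both inequalities $\kappa(F)\le 0$ and $\kappa(F)\ge 0$ cheaply: the first from Theorem \ref{6-iitaka} applied to the Albanese map, using that the base has trivial canonical class, and the second from a direct restriction of a pluricanonical section. Write $\alpha\colon X\to A$ for the Albanese map, where $A$ is the Albanese variety, an abelian variety of dimension $m=\dim A$. By Kawamata [\ref{kaw-albanese}], $\alpha$ is an algebraic fibre space, so it is surjective with connected fibres and we may speak of its general fibre $F$. If $m=0$ then $A$ is a point and $F=X$, so $\kappa(F)=\kappa(X)=0$ and there is nothing to prove; hence I will assume $m\ge 1$, so that $1\le m\le n\le 6$ and $\dim F=n-m\le 5$.

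First I would establish $\kappa(F)\le 0$. Since $A$ is an abelian variety, $K_A=0$ and thus $\kappa(A)=0$. As $X$ is smooth projective of dimension $n\le 6$ and $A$ is smooth projective, Theorem \ref{6-iitaka} (that is, $C_{n,m}$ in this range) applies to $\alpha$ and yields $\kappa(X)\ge \kappa(F)+\kappa(A)=\kappa(F)$. Because $\kappa(X)=0$ by hypothesis, this gives $\kappa(F)\le 0$.

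The remaining, slightly more delicate, point is the reverse inequality $\kappa(F)\ge 0$, where a priori one must exclude the possibility $\kappa(F)=-\infty$. I would avoid any appeal to non-vanishing or uniruledness and instead restrict a pluricanonical divisor. Since $\kappa(X)=0$, there is an integer $m_0>0$ and an effective divisor $D\in|m_0K_X|$. The fibres of $\alpha$ over a dense open subset of $A$ are smooth of dimension $n-m$ and sweep out a dense open subset of $X$, whereas $\codim_X\Supp D=1$; a dimension count (a general fibre cannot lie in $\Supp D$, since $\Supp D$ can contain the fibres only over a subvariety of $A$ of dimension $\le m-1<m$) shows that a general fibre $F=\alpha^{-1}(a)$ is not contained in $\Supp D$. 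Hence $D|_F$ is a well-defined effective divisor on $F$. Moreover, for a smooth fibre $F$ the relative-canonical identity $K_{X/A}=K_X-\alpha^*K_A$ together with $\alpha^*K_A=0$ gives $K_X|_F=K_{X/A}|_F=K_F$, so $D|_F\in|m_0K_F|$ is effective and therefore $\kappa(F)\ge 0$.

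Combining the two inequalities gives $\kappa(F)=0$, which is precisely Part $(1)$ of Conjecture \ref{c-ueno} in the range $\dim X\le 6$. The only substantial input is Theorem \ref{6-iitaka}; everything else is formal. The main thing to be careful about is the lower bound: one must verify that the general fibre meets $\Supp D$ properly and invoke the adjunction identity $K_X|_F=K_F$ coming from the triviality of $K_A$, which is exactly what lets one sidestep the harder question of deducing $\kappa(F)\ge 0$ from non-vanishing results in dimension up to $5$.
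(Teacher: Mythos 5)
Your proof is correct and takes the same route as the paper, which simply declares the corollary ``immediate by Theorem \ref{6-iitaka}'': you apply $C_{n,m}$ to the Albanese map to get $\kappa(F)\le \kappa(X)-\kappa(A)=0$, and rule out $\kappa(F)=-\infty$ by restricting a pluricanonical section. The only difference is that you spell out the routine lower bound $\kappa(F)\ge 0$ (which could equally be obtained from easy addition, $\kappa(X)\le\kappa(F)+\dim A$), a step the paper leaves implicit.
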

\begin{proof}
Immediate by Theorem \ref{6-iitaka}.
\end{proof}

Concerning part $(1)$ of Ueno conjecture, recently Chen and Hacon [\ref{CH}] showed that $\kappa(F) \le \dim A$.

\section*{Acknowledgements}

I would like to thank Burt Totaro for many helpful conversations and comments. I am grateful to Fr\'ed\'eric Campana for reminding me 
of a beautiful theorem of him and Thomas Peternell which considerably simplified my arguments.

\section{Preliminaries}

\emph{Nef divisors.} 
A Cartier divisor $L$ on a projective variety $X$ is called nef if $L\cdot C\ge 0$ 
for any curve $C\subseteq X$. If $L$ is a $\Q$-divisor, we say that it is nef if 
$lL$ is Cartier and nef for some $l\in \N$. We need a theorem about nef $\Q$-divisors 
due to Tsuji [\ref{Tsuji}] and Bauer et al. [\ref{nef}]. 

\begin{thm}
Let $L$ be a nef $\Q$-divisor on a normal projective variety $X$. Then, there is a dominant almost regular rational map $\pi\colon X\bir Z$ 
 with connected fibres to a normal projective variety, called the reduction map of $L$, such that\\\\ 
(1) if a fibre $F$ of $\pi$ is projective and $\dim F=\dim X-\dim Z$, then $L|_F\equiv 0$,\\
(2) if $C$ is a curve on $X$ passing  through a very general point $x\in X$ with $\dim \pi(C)>0$, then $L\cdot C>0$. 
\end{thm}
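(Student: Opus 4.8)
The plan is to realize $\pi$ as the quotient of $X$ by the equivalence relation generated by the curves on which $L$ has degree zero, and then to read off properties (1) and (2) from that construction. Call an irreducible curve $C\subseteq X$ \emph{$L$-trivial} if $L\cdot C=0$; by nefness these are exactly the curves meeting the ``null directions'' of $L$. The $L$-trivial curves are parametrised by an at most countable union of components of the Chow variety of $X$. For a very general point $x\in X$, form the chain-connected locus $S_x$: the set of all $y\in X$ that can be joined to $x$ by a finite connected chain of $L$-trivial curves. The fibres of the desired map $\pi\colon X\bir Z$ should be precisely the closures $\overline{S_x}$, and $\dim Z=\dim X-\dim\overline{S_x}$ for very general $x$.

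The technical heart is to upgrade this set-theoretic partition into an honest \emph{almost regular} map, i.e.\ one whose general fibre is disjoint from the indeterminacy locus. Here I would invoke the general quotient construction for a covering algebraic family of curves (in the spirit of Campana's quotient maps and the maximal rationally connected fibration): a single algebraic family of $L$-trivial curves yields an almost regular dominant map with connected fibres whose general fibre is the chain-equivalence class for that family. To assemble all the countably many families at once I would run a stabilisation argument: the dimension of the general fibre of any such partial quotient is bounded above by $\dim X$, so I may choose a quotient whose general fibre dimension is maximal among those arising from chains of $L$-trivial curves. Because only countably many families occur, and ``very general'' is chosen to avoid the countably many proper subvarieties along which the loci $S_x$ jump, the chains through a very general $x$ stabilise after finitely many families; hence $\overline{S_x}$ is a genuine fibre of an almost regular map $\pi\colon X\bir Z$ with connected fibres.

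Granting $\pi$, property (2) is essentially immediate, so I would verify it first as a consistency check: if $C$ passes through a very general $x$ and $L\cdot C=0$, then $C$ is $L$-trivial, so $C\subseteq S_x$, which lies in the fibre of $\pi$ through $x$; thus $\dim\pi(C)=0$. Contrapositively, $\dim\pi(C)>0$ forces $L\cdot C>0$, which is exactly (2). For property (1), a general projective fibre $F$ of the correct dimension $\dim X-\dim Z$ is, by construction, a single chain-equivalence class, so the nef reduction of the restriction $L|_F$ is trivial. It then remains to show that a nef divisor whose nef reduction maps to a point is numerically trivial; combined with nefness (which gives $L|_F\cdot C'\ge 0$ for every curve $C'\subseteq F$), this yields $L|_F\equiv 0$.

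The step I expect to be the main obstacle is the construction of the almost regular quotient together with its base case. Two intertwined difficulties arise. First, one must prove that the relation generated by the \emph{countable} union of families of $L$-trivial curves is cut out by an actual almost regular morphism over a dense open subset of $Z$ — that the loci $\overline{S_x}$ fit into an algebraic family, that the general one avoids the indeterminacy locus, and that enlarging from finitely many families to the full countable union does not enlarge the general fibre; this is precisely where the ``very general'' hypotheses in both conclusions are consumed. Second, the base case needed for (1), namely that a nef $L$ with trivial nef reduction satisfies $L\equiv 0$, is itself substantive: it amounts to showing that a nef, numerically nontrivial divisor must admit a nonconstant reduction, which I would establish by exhibiting a positive-dimensional family of directions along which $L$ is strictly positive and feeding it back into the maximality of the quotient.
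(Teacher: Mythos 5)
This theorem is not proved in the paper at all: it is stated as background and attributed to Tsuji [\ref{Tsuji}] and to Bauer et al.\ [\ref{nef}], so there is no in-paper argument to compare yours with; what you are really attempting is a proof of the main theorem of [\ref{nef}]. Your outline does identify the strategy used there --- quotient $X$ by chain-connectedness with respect to the (countably many) covering families of $L$-trivial curves, pass to a quotient of maximal fibre dimension, and read off (1) and (2) from the fact that the very general fibre is exactly a chain-equivalence class --- and your derivation of (2) is correct once that construction is granted. But as a proof it has two genuine gaps, both of which you flag yourself. The first is the existence of the almost regular quotient: that the loci $\overline{S_x}$ fit into an algebraic family, that the general member is projective and avoids the indeterminacy locus, and that passing from finitely many families to the full countable union does not enlarge the general fibre. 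This is not a routine technicality; it is the bulk of [\ref{nef}] (their quotient theorem for covering families), and ``choose a quotient of maximal general fibre dimension'' only works after one shows that quotients with respect to two families are both dominated by a quotient with respect to their join, and that the very general chain-equivalence class is closed and of constant dimension.

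The second gap is worse because the route you propose for it is circular. To get (1) you reduce to the claim that a nef divisor whose nef reduction maps to a point is numerically trivial. That claim is exactly the case $n(L)=0$ of part (1) of the theorem --- indeed the paper deduces ``$n(L)=0$ implies $L\equiv 0$'' \emph{from} the theorem --- so it cannot serve as your base case. What is actually needed is a direct lemma: if $F$ is a projective variety, $L$ is nef on $F$, and any two points of $F$ can be joined by a connected chain of $L$-trivial curves, then $L\cdot C'=0$ for \emph{every} curve $C'\subseteq F$, not merely for the curves belonging to the connecting families. In [\ref{nef}] this is proved by induction on $\dim F$, analysing the subfamily of $L$-trivial curves through a fixed point according to whether it sweeps out all of $F$ or a proper subvariety, together with the projection formula on a resolution of the graph of the family. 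Without some such argument, nothing in your proposal excludes a curve $C'\subseteq F$ with $L\cdot C'>0$ even though $F$ is chain-connected by $L$-trivial curves, so (1) is not established.
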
 

Here by almost regular we mean that some of the fibres of $\pi$ are projective and away from the indeterminacy locus of $\pi$. 
Using the previous theorem, one can define the nef dimension $n(L)$ of the nef $\Q$-divisor $L$ to be $n(L):=\dim Z$. 
In particular, if $n(L)=0$, the theorem says that $L\equiv 0$.\\

\emph{Minimal models.} 
Let $X$ be a smooth projective variety. A projective variety $Y$ with terminal singularities is called a minimal model of $X$ 
if there is a birational map $\phi\colon X\bir Y$, such that $\phi^{-1}$ does not contract divisors, $K_Y$ is nef, and finally there is a common 
resolution of singularities $f\colon W\to X$ and $g\colon W\to Y$ such that $f^*K_X-g^*K_Y$ is effective and its support contains the birational transform of any prime divisor 
on $X$ which is exceptional over $Y$. If in addition $lK_Y$ is base point free 
for some $l\in \N$, we call $Y$ a good minimal model. 

The minimal model conjecture asserts 
that every smooth projective variety has a minimal model or a Mori fibre space, in particular, if the variety has 
nonnegative Kodaira dimension then it should have a minimal model. The abundance conjecture states that every minimal model is a good one.\\ 
 
\emph{Kodaira dimension}. Campana and Peternell [\ref{CP}] made the following interesting conjecture. 

\begin{conj}
Let $X$ be a smooth projective variety and suppose that $K_X\equiv A+M$ where $A$ and $M$ are effective and pseudo-effective 
$\Q$-divisors respectively. Then, $\kappa(X)\ge \kappa(A)$.
\end{conj}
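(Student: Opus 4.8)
The plan is to reduce the statement to a non-vanishing property on the fibres of the Iitaka fibration of $A$ and then to propagate sections to the base by positivity of direct images. At the outset it is worth emphasising that the assertion contains the non-vanishing conjecture: taking $A=0$, so that $\kappa(A)=0$, and $M=K_X$ gives exactly the statement that a pseudo-effective $K_X$ has $\kappa(X)\ge 0$. Hence an unconditional proof is out of reach, and the realistic target is the conditional version, assuming good minimal models (equivalently, abundance-type non-vanishing) in the relevant dimension. This is precisely the input that is available in low dimensions through [\ref{B2}], [\ref{Miyaoka}] and [\ref{Kaw-abundance}] and that is used throughout this paper.

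First I would pass to a convenient model. Since $\kappa$ is a birational invariant of $K_X$, and the pullback of $A$ under a birational morphism from a smooth variety has the same Iitaka dimension, I may replace $X$ by a smooth model $\mu\colon X'\to X$ on which the Iitaka fibration $\phi\colon X'\to T$ of $A':=\mu^*A$ is a morphism, with $\dim T=\kappa(A)$ and general fibre $F$. Writing $K_{X'}=\mu^*K_X+E_0$ with $E_0\ge 0$ exceptional, we get $K_{X'}\equiv A'+N$, where $N:=\mu^*M+E_0$ is pseudo-effective and $\kappa(A')=\kappa(A)$. By the defining property of the Iitaka fibration, $A'\sim_{\Q}\phi^*H+\Phi$ with $H$ big on $T$ and $\Phi\ge 0$, while the restriction $A'|_F$ is effective of Iitaka dimension $0$.

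Next I would establish non-vanishing on the general fibre. Restricting to a general $F$ and using $K_F=K_{X'}|_F$, we obtain $K_F\equiv A'|_F+N|_F$; here $A'|_F$ is effective and $N|_F$ is pseudo-effective, since the restriction of a pseudo-effective class to a general fibre of a fibration is again pseudo-effective. Thus $K_F$ is pseudo-effective. Granting a good minimal model for $F$ --- this is the one genuinely hard input --- abundance yields $\kappa(F)\ge 0$, that is, an effective $\Q$-divisor linearly equivalent over $\Q$ to a multiple of $K_F$. The main obstacle is concentrated entirely in this step, as the first paragraph already predicts.

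Finally I would globalise by the positivity of direct images. The semiample part of $A'$ supplies the big class $H$ on $T$, so that $K_{X'}\equiv\phi^*H+(\Phi+N)$ with $\Phi+N$ pseudo-effective and $H$ big on the base. Feeding in the fibrewise effectivity of the previous step, the weak positivity of the sheaves $\phi_*\mathcal{O}_{X'}(mK_{X'/T})$ (Viehweg [\ref{Viehweg}], Kawamata [\ref{Kaw3}]) shows that, after twisting by the big class $H$ pulled back from $T$, one obtains a big subsheaf of $\phi_*\mathcal{O}_{X'}(mK_{X'})$ on $T$; consequently $h^0(X',mK_{X'})$ grows like $m^{\dim T}$, giving $\kappa(X)=\kappa(X')\ge\dim T=\kappa(A)$. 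Everything here beyond the non-vanishing input is the same circle of positivity-of-direct-images arguments that underlies Kawamata's proof that $C_{n,m}$ holds when the general fibre has a good minimal model, which is why the conjecture is accessible precisely in the dimensions where that minimal model input is known.
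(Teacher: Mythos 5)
The statement you are addressing is presented in the paper as an \emph{open conjecture} of Campana and Peternell; the paper offers no proof of it and only invokes the special case $M\equiv 0$, which it cites from [\ref{CP}, Theorem 3.1]. So there is no proof in the paper to compare against, and you are right to observe that the general statement contains the non-vanishing conjecture (take $A=0$, $M=K_X$) and therefore cannot be expected to succumb to the available techniques. But even read as a conditional argument, your proposal has a genuine gap.

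The gap is the conflation of numerical equivalence with $\Q$-linear equivalence at the one step where sections must actually be produced. The hypothesis $K_X\equiv A+M$ is only a numerical decomposition, and $\kappa$ is not a numerical invariant: a degree-zero non-torsion line bundle on an elliptic curve is numerically trivial but has $\kappa=-\infty$. This is precisely why the conjecture is hard, and why even the case $M\equiv 0$ required a real theorem in [\ref{CP}] rather than the trivial remark that $\kappa(A+M)\ge\kappa(A)$ when $K_X\sim_{\Q}A+M$ with $M$ effective. In your final step you pass from the numerical relation $K_{X'}\equiv\phi^*H+(\Phi+N)$ to a claim about $h^0(X',mK_{X'})$ via weak positivity of $\phi_*\omega^{m}_{X'/T}$ twisted by $H$. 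Weak positivity arguments require an actual $\Q$-linear identity, such as $mK_{X'}\sim mK_{X'/T}+\phi^*(mK_T)$, linking the bundle whose sections you want to the direct image sheaf you are twisting; a merely numerical relation between $K_{X'}$ and $\phi^*H$ induces no map of sheaves and yields no sections. Concretely, $\phi_*\mathcal{O}_{X'}(mK_{X'})\cong\phi_*\omega^{m}_{X'/T}\otimes\mathcal{O}_T(mK_T)$, and nothing in your setup controls $K_T$ or relates it to the big class $H$ --- the Iitaka fibration of $A$ may well have a base of negative Kodaira dimension, and $\phi$ here is not the Iitaka fibration of $K_{X'}$, so the Kawamata-style machinery you appeal to does not attach to it. The fibre step is less damaged (pseudo-effective classes do restrict to pseudo-effective classes on very general fibres, so $K_F$ is pseudo-effective and, conditionally on abundance, $\kappa(F)\ge 0$), but the globalization collapses. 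Any viable approach must confront the numerical-versus-linear discrepancy head on, as [\ref{CP}] does in the case $M\equiv 0$; your outline postpones it and then silently assumes it away.
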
 

They proved the conjecture in case $M\equiv 0$ [\ref{CP}, Theorem 3.1]. This result is an important ingredient of the 
proofs below.\\

\section{Proofs}

\begin{proof}(of Theorem \ref{p-surface})
We are given that the base variety $Z$ has dimension $2$ and that $\kappa(F)=0$.
We may assume that $\kappa(Z)\ge 0$ otherwise the theorem is trivial. 
Let $p\in \N$ be the smallest number such that $f_*\mathcal{O}_X(pK_X)\neq 0$. By Fujino-Mori 
[\ref{FM}, Theorem 4.5], there is a diagram 
$$
\xymatrix{
X' \ar[d]^g\ar[r]^\sigma & X \ar[d]^f\\
Z' \ar[r]^\tau & Z
}
$$
in which $g$ is an algebraic fibre space of smooth projective varieties, $\sigma$ and $\tau$ are birational, and there are $\Q$-divisors $B$ 
and $L$ on $Z'$ and a $\Q$-divisor $R=R^+-R^-$ on $X'$ decomposed into its positive and 
negative parts satisfying the following:\\ 

(1) $B\ge 0$,\\

(2) $L$ is nef,\\

(3) $pK_{X'}=pg^*(K_{Z'}+B+L)+R$,\\

(4) $g_*\mathcal{O}_{X'}(iR^+)=\mathcal{O}_{Z'}$ for any $i\in\N$,\\

(5) $R^-$ is exceptional$/X$ and codimension 
of $g(\Supp R^-)$ in $Z'$ is $\ge 2$.

\vspace{0.5cm}

Thus for any sufficiently divisible $i\in\N$ we have\\
  
(6) $g_*\mathcal{O}_{X'}(ipK_{X'}+iR^-)=\mathcal{O}_{Z'}(ip(K_{Z'}+B+L))$\\

 If the nef dimension $n(L)=2$ or if $\kappa(Z)=\kappa(Z')=2$, then $ip(K_{Z'}+L)$ is big for some $i$ 
by Ambro  [\ref{Ambro}, Theorem 0.3]. So, $ip(K_{Z'}+B+L)$ is also big and by (6) 
and by the fact that $\sigma$ is birational and $R^-\ge 0$ is exceptional$/X$ we have 
$$
H^0(ipK_X)=H^0(ipK_{X'}+iR^-)=H^0(ip(K_{Z'}+B+L))
$$
for sufficiently divisible $i\in \N$. Therefore, in this case $\kappa(X)=2\ge \kappa(Z)$. 

 If  $n(L)=1$, then the nef reduction map $\pi\colon Z'\to C$ is regular where $C$ is a smooth projective curve, and there is a 
$\Q$-divisor $D'$ on $C$ such that $L\equiv \pi^*D'$ and $\deg D'>0$ by [\ref{nef}, Proposition 2.11]. On the 
other hand, if $n(L)=0$ then $L\equiv 0$. So, when $n(L)=1$ or $n(L)=0$, there is a $\Q$-divisor $D\ge 0$ such that 
$L\equiv D$.  Now letting $M:=\sigma_*g^*(D-L)$, for sufficiently divisible $i\in \N$, we have 
$$
H^0(ip(K_{X}+M))=H^0(ip(K_{X'}+g^*D-g^*L)+iR^-)
$$
$$
\hspace{1.6cm} =H^0(ip(K_{Z'}+B+D))
$$ 
and by Campana-Peternell [\ref{CP}, Theorem 3.1] 
$$
\kappa(X)\ge \kappa(K_{X}+M)=\kappa(K_{Z'}+B+D)\ge \kappa(Z)
$$
\end{proof}

\begin{proof}(of Theorem \ref{6-iitaka})
We assume that $\kappa(Z)\ge 0$ and $\kappa(F)\ge 0$ otherwise the theorem is trivial. 

If $m=1$, then the theorem follows from  Kawamata [\ref{kaw2}]. On the other hand, if $n-m\le 3$, then 
the theorem follows from Kawamata [\ref{Kaw3}] and the existence of good minimal models in dimension $\le 3$. So, 
from now on we assume that $n=6$ and $m=2$ hence $\dim F=4$.  By the flip theorem of Shokurov [\ref{shokurov}] and the termination theorem of Kawamata-Matsuda-Matsuki [\ref{KMM}, 5-1-15] $F$ has a minimal model (see also [\ref{B2}]). If $\kappa(F)> 0$, by Kawamata [\ref{Kaw}, Theorem 7.3] such a minimal model is good, so we can apply [\ref{Kaw3}] again. Another possible argument would be to apply Koll\'ar [\ref{kollar}] when $F$ is of general type and to use 
the relative Iitaka fibration otherwise. 

Now assume that $\kappa(F)=0$. In this case, though we know that $F$ has a minimal model, abundance is not yet known. Instead, we use Theorem \ref{p-surface}.
\end{proof}

\vspace{2cm}

\flushleft{DPMMS}, Centre for Mathematical Sciences,\\
Cambridge University,\\
Wilberforce Road,\\
Cambridge, CB3 0WB,\\
UK\\
email: c.birkar@dpmms.cam.ac.uk

\end{document}